\title{Difference of facial achromatic numbers between two triangular embeddings of a graph}
\author{Kengo Enami \footnote{Department of Computer and Information Science, Faculty of Science and Technology, Seikei University, 3-3-1 Kichijoji-Kitamachi, Musashino-shi, Tokyo, 180-8633, Japan. E-mail:~\texttt{enamikengo@gmail.com}}
\and Yumiko Ohno \footnote{Research initiatives and promotion organization, Yokohama National University, 79-5, Tokiwadai, Hodogaya-ku, Yokohama, Kanagawa 240-8501, Japan. E-mail:~\texttt{ohno-yumiko-hp@ynu.ac.jp}}
}
\date{}
\newtheorem{theorem}{Theorem}
\newtheorem{lemma}[theorem]{Lemma}
\newtheorem{claim}[theorem]{Claim}
\begin{document}
\maketitle
%\footnote[0]{2010 {\em Mathematics Subject Classification}: 05C10, 05C12.}
\footnote[0]{\emph{Key Words}: facial complete coloring, complete coloring, facially-constrained coloring, triangulation, re-embedding}

\begin{abstract}
A facial $3$-complete $k$-coloring of a triangulation $G$ on a surface is a vertex $k$-coloring such that every triple of $k$-colors appears on the boundary of some face of $G$.
The facial $3$-achromatic number $\psi_3(G)$ of $G$ is the maximum integer $k$ such that $G$ has a facial $3$-complete $k$-coloring.
This notion is an expansion of the complete coloring,
that is, a proper vertex coloring of a graph such that every pair of colors appears on the ends of some edge.

For two triangulations $G$ and $G'$ on a surface,
$\psi_3(G)$ may not be equal to $\psi_3(G')$
even if $G$ is isomorphic to $G'$ as graphs.
Hence, it would be interesting to see how large the difference between $\psi_3(G)$ and $\psi_3(G')$ can be.
We shall show that the upper bound for such difference in terms of the genus of the surface.
\end{abstract}

\section{Introduction}
In this paper, we consider finite and undirected graph.
A graph is called \emph{simple} if it has no loops and multiple edges.
We mainly focus on simple graphs unless we particularly mention it.
An \emph{embedding} of a graph $G$ on a surface $\mathbb{F}$ is a drawing of $G$ on $\mathbb{F}$ with no pair of crossing edges.
Technically, we regard an embedding as injective continuous map $f:G\to \mathbb{F}$,
where $G$ is regarded as a one-dimensional topological space.
We sometime consider that $G$ is already mapped on a surface and denote its image by $G$ itself to simplify the notation,
while if we deal with two or more embeddings of $G$ on a surface, 
we denote them by $f_1(G),f_2(G), \ldots$ to distinguish them.

The \emph{faces} of a graph $G$ embedded on a surface $\mathbb{F}$ are the connected components of the open set $\mathbb{F}-G$.
We denote by $V(F)$ the set of vertices in the boundary of a face $F$ of $G$,
and by $\mathcal{F}(G)$ the set of faces of $G$.
A \emph{triangulation} on a surface $\mathbb{F}$ is an embedding of a graph on $\mathbb{F}$ so that each face is bounded by a $3$-cycle.
A graph $G$ is said to have a triangulation on a surface,
if $G$ is embeddable on the surface as a triangulation.

A (vertex) \emph{$k$-coloring} of a graph $G$ is a map $c:V(G)\to \{1,2, \ldots , k\}$.
A $k$-coloring $c$ of $G$ is \emph{proper} if $c(u)\neq c(v)$ whenever two vertices $u$ and $v$ are adjacent.
For a subset $S \subseteq V(G)$,
we denote by $c(S)$ the set of colors of the vertices in $S$.
Colorings of graphs embedded on surfaces with facial constraints have attracted a lot of attention.
In particular, facially-constrained colorings of plane graphs were overviewed by Czap and Jendrol' \cite{survey}.
Many facially-constrained colorings can be translated into colorings of some kind of hypergraphs, called ``face-hypergraphs".
The \emph{face-hypergraph} $\mathcal{H}(G)$ of a graph $G$ embedded on a surface is the hypergraph with vertex-set $V(G)$ and edge-set $\{ V(F) : F\in \mathcal{F} (G)\}$, whose concept was introduced in \cite{KR}.

A \emph{complete $k$-coloring} of a graph $G$ is a proper $k$-coloring such that
each pair of $k$-colors appears on at least one edge of $G$.
The \emph{achromatic number} of $G$ is the maximum integer $k$ such that
$G$ has a complete $k$-coloring.
This notion was introduced by Harary and Hedetniemi \cite{HH},
and has been extensively studied (see \cite{HM} for its survey).
Recently, Matsumoto and the second author \cite{MO} introduced a new facially-constrained coloring, called  the ``facial complete coloring'',
which is an expansion of the complete coloring.
A $k$-coloring, which is not necessarily proper, of a graph $G$ embedded on a surface is \emph{facially $t$-complete}
if for any $t$-element subset $X$ of the $k$ colors,
there is a face $F$ of $G$ such that $X\subseteq c(V(F))$.
The maximum integer $k$ such that $G$ has a facial $t$-complete $k$-coloring is the \emph{facial $t$-achromatic number} of $G$, denoted by $\psi_t(G)$.
It seems to be natural to consider facial $t$-complete colorings for graphs embedded on a surface so that each face is bounded by a cycle of length $t$.

%Matsumoto and the second author \cite{MO}  also extended the notion of the complete coloring to hypergraphs:
%a \emph{$t$-complete $k$-coloring} of a hypergraph $H$ is a $k$-coloring of $H$ such that every $t$-tuple of $k$-colors appear on at least one edge of $H$  (a bit different definitions of a complete coloring to hypergraphs can be found in \cite{DLR, JO}).
%Then, a facial $t$-complete coloring of a graph embedded on a surface corresponds to a facial $t$-complete coloring of its face-hypergraph.

We should notice that the facial $t$-achromatic number of an embedded graph depends on the embedding of the graph in general.
That is, if a graph $G$ has two distinct embeddings $f_1(G)$ and $f_2(G)$ on $\mathbb{F}$,
then $\psi_t(f_1(G))$ may not be equal to $\psi_t(f_2(G))$.
Hence, it would be interesting to see how large the difference between $\psi_t(f_1(G))$ and $\psi_t(f_2(G))$ can be.
In this paper, we focus on facial $3$-complete colorings of triangulations on a surface from this point of view,
and show the upper bound for such difference as follows.

%Let $G$ be the graph obtained from $K_{2,4}$ by replacing two paths joining the vertices of degree $4$ with two paths of length $m+1$, respectively.
%Then $G$ has two embeddings $f_1(G)$ and $f_2(G)$ on the sphere
%such that $f_1(G)$ has a face bounded by the cycle of length $2m$,
%and every face of $f_2(G)$ is bounded by a cycle of length $m+2$.

%In this example, we can show that $\psi_t(f_1(G))=2m$ and $\psi_t(f_2(G))=m+2$
%(the proof is easy and hence we omit it).
%This implies that the difference of the facial $t$-achromatic numbers between two embeddings of a graph on the sphere can be arbitrarily large.
%It would also be interesting to see how large the difference of the facial $t$-achromatic numbers between two embeddings of a graph on a surface can be
%if all faces of the embeddings are bounded by cycles of length $t$.
%Actually, in this paper, we focus on facial $3$-complete colorings of triangulations on a surface,
%and show the upper bound for such difference as follows.

%Throughout this paper, we call both of the orientable genus of an orientable surface and the non-orientable genus of a non-orientable surface \emph{genera} of surfaces.
%That is, for a surface $\mathbb{F}$ of genus $g$, we have $\chi(\mathbb{F})=2-2g$,
%where $\chi(\mathbb{F})$ is the Euler characteristic of $\mathbb{F}$,
%if $\mathbb{F}$ is orientable, and we have $\chi(\mathbb{F})=2-g$ otherwise.

\begin{theorem}\label{upper}
Let $G$ be a graph which has two triangulations $f_1(G)$ and $f_2(G)$ on a surface $\mathbb{F}$,
and let $g$ be the Euler genus of $\mathbb{F}$.
If $\mathbb{F}$ is orientable, then
$$ | \psi_3(f_1(G))-\psi_3(f_2(G)) | \leq 
\begin{cases}
9g/2 & (g\leq 1)\\
27g/2-27 & (otherwise).
\end{cases}
$$
If $\mathbb{F}$ is non-orientable, then
$$ | \psi_3(f_1(G))-\psi_3(f_2(G)) | \leq 
\begin{cases}
3g & (g=1)\\
21g-27 & (otherwise).
\end{cases}
$$
\end{theorem}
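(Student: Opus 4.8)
The plan is to prove the statement by transferring an optimal coloring from one embedding to the other and controlling how many colors must be sacrificed. Let $c$ be a facial $3$-complete coloring of $f_1(G)$ realising $k_1=\psi_3(f_1(G))$. Since $c$ is a coloring of the abstract graph $G$, it is simultaneously a coloring of $f_2(G)$; the only way it can fail to be facial $3$-complete there is that some triple $X$ of colors is carried by a face of $f_1(G)$ but by no face of $f_2(G)$, and I call such an $X$ \emph{lost}. A triangular face carries at most one triple of distinct colors, and every triple witnessed by a face common to both embeddings survives, so each lost triple is witnessed in $f_1(G)$ only by faces that are not faces of $f_2(G)$; sending each lost triple to one such face is injective, whence the number of lost triples is at most the \emph{facial discrepancy} $D:=|\mathcal{F}(f_1(G))\setminus\mathcal{F}(f_2(G))|$. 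Now pick a set $S$ of colors meeting every lost triple and recolor each vertex whose color lies in $S$ by some color outside $S$: any triple avoiding $S$ is not lost, and the face of $f_2(G)$ witnessing it has all three colors outside $S$, so it is untouched by the recoloring. This produces a facial $3$-complete coloring of $f_2(G)$ with $k_1-|S|$ colors, and taking $S$ a minimum transversal of the lost-triple hypergraph and arguing symmetrically gives
\[
|\psi_3(f_1(G))-\psi_3(f_2(G))|\le \tau,
\]
where $\tau\le D$ is the transversal number. Everything therefore reduces to a structural bound on $D$ (and, for the sharp constants, on $\tau$).

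The heart of the argument is thus to bound the facial discrepancy of two triangulations of the \emph{same} graph by the Euler genus. Both embeddings share $V(G)$ and $E(G)$, and by Euler's formula together with $3|\mathcal{F}|=2|E(G)|$ they share the face count $|\mathcal{F}|=2|V(G)|-4+2g$. I would study the subcomplex $K$ built from the faces common to both embeddings: off $K$ the two triangulations disagree, and a discrepant face of $f_1(G)$ is a $3$-cycle of $G$ that is non-facial in $f_2(G)$. The decisive point is a rigidity phenomenon: wherever the common faces cut out a disk, a triangulation is determined by its graph (a Whitney-type uniqueness for disk triangulations), so the two embeddings must actually agree there. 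Hence all discrepancy is forced into the topologically non-trivial part of $\mathbb{F}$, and an Euler-characteristic budget bounds the number of faces the complement of $K$ can carry linearly in $g$. In particular the sphere ($g=0$) forces $D=0$ by Whitney's theorem, matching $9g/2$ at $g=0$.

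To extract the precise coefficients I would separate the regimes of the statement. For the sphere and the projective plane the embedding of a $3$-connected triangulation is essentially rigid, so the discrepancy is as small as the topology allows; the torus needs a dedicated argument showing its lost triples can always be killed with no net loss of colors, which is exactly what makes $27g/2-27$ vanish at $g=2$. For larger genus the bound comes from maximising the number of discrepant faces a genus-$g$ surface can support subject to the shared-edge constraint and then bounding the transversal number $\tau$, with the orientable/non-orientable split ($27/2$ versus $21$, and the small-genus values $9/2$ and $3$) emerging from this optimisation. The main obstacle is precisely this structural bound on $D$: a priori a high-genus surface admits triangulations with arbitrarily many faces, so the content is to show that using the \emph{identical} edge set confines all the embedding flexibility to a bounded-complexity neighbourhood of the handles and crosscaps, and getting the constants sharp rather than merely linear is the most delicate part of the proof.
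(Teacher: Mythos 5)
Your opening paragraph is essentially the paper's own first move, and it is correct: the paper takes a maximal family $\mathcal{T}$ of pairwise \emph{disjoint} lost triples and deletes the $3|\mathcal{T}|$ colors meeting them, which is your transversal recoloring with $S=\bigcup\mathcal{T}$ (so $\tau\le 3m$ where $m=|\mathcal{T}|$), and the same ``witness face is untouched'' argument shows the truncated coloring is facial $3$-complete in $f_2(G)$. The gap is everything after that. The structural claim your plan rests on --- that the facial discrepancy $D=|\mathcal{F}(f_1(G))\setminus\mathcal{F}(f_2(G))|$ is bounded linearly in $g$ --- is both unproved and not believable, and it is not what the paper bounds. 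There is no ``Euler-characteristic budget'' for discrepant faces: both embeddings have $|\mathcal{F}|=2|V(G)|-4+2g$ faces, which grows with $|V(G)|$ at fixed $g$, and the complement of your common subcomplex $K$ can a priori be the entire surface and carry $\Theta(|V(G)|)$ faces. The only constraint on a discrepant face is local: a $3$-cycle facial in $f_1(G)$ but not in $f_2(G)$ must be non-contractible in $f_2(G)$ --- this is Lemma~\ref{facial}, proved via the bridge Lemma~\ref{bridge}, and it is the honest version of your unformulated ``Whitney-type rigidity on disk pieces'' step. But a fixed-genus surface supports arbitrarily many non-contractible $3$-cycles once they are allowed to intersect one another or be homotopic, so non-contractibility alone cannot bound $D$, and your injection of lost triples into discrepant faces (correct, but feeding into $\tau\le D$) leads nowhere.

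The idea missing from your proposal is to exploit the \emph{disjointness} of the triples in the packing: since the $T_i$ are pairwise disjoint color sets, their witness faces in $f_1(G)$ are pairwise vertex-disjoint $3$-cycles, hence in $f_2(G)$ they form a family of pairwise disjoint non-contractible cycles. Now topology bites: Lemma~\ref{prop} (Malni\v{c}--Mohar) bounds the number of pairwise non-homotopic such cycles by $3g/2-3$ (orientable) resp.\ $3g-3$, refined in Lemma~\ref{prop2} to $g$ one-sided plus $2g-3$ two-sided classes in the non-orientable case, and Claim~\ref{cl} --- again a bridge-lemma argument: two disjoint chord-free facial cycles of $f_1(G)$ leave only one bridge, so four pairwise homotopic members are impossible --- caps each homotopy class at three cycles. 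Together these give $m=O(g)$ with the stated constants, hence the theorem via $\tau\le 3m$, with no control of $D$ ever needed. Finally, your torus plan chases a misprint: the orientable case split should read $g\le 2$, and on the torus the proof yields $m\le 3$, i.e., a difference of at most $9=9g/2$, not $0$; the ``lost triples can be killed with no net loss of colors'' claim you defer to ``a dedicated argument'' is strictly stronger than what the paper proves and you offer no mechanism for it.
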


Note that we can easily construct a triangulation on each surface so that its facial $3$-achromatic number is an arbitrarily large,
while Theorem~\ref{upper} implies that the difference of the 
facial $3$-achromatic numbers between two triangulations $f_1(G)$ and $f_2(G)$ on a given surface,
which is obtained from the same graph $G$,
can be bounded by a constant.

On the other hand, the upper bounds in Theorem~\ref{upper} do not seem to be sharp.
Unfortunately, we have no construction of a graph which has two triangulations on a surface whose facial $3$-achromatic numbers differ.
So one may suspect that $\psi_t(f_1(G))=\psi_t(f_2(G))$
whenever a graph $G$ has two triangulations $f_1(G)$ and $f_2(G)$ on a surface.
However, we do not believe that.
Actually, we shall show in Section \ref{multi}, the non-simple graphs having two triangulations on a surface whose facial $3$-achromatic numbers differ (the definition of the facial complete coloring can be extended to non-simple graphs naturally).
Hence, we hope that there exist such graphs for simple graphs.

We introduce some useful lemmas in Section \ref{pre}
to prove Theorem \ref{upper} in Section \ref{proof}.
Before these sections, we would like to introduce some related results dealing with other facially-constrained colorings.
For not only the facial complete coloring but also other facially-constrained colorings,
the possibility of such a coloring depends on the embedding in general.
We survey some results from this point of view in the next section.

\section{Related results}\label{related}

A \emph{rainbow coloring} (or a \emph{cyclic coloring})
is a coloring of a graph $G$ embedded on a surface
so that each face is \emph{rainbow},
that is, any two distinct vertices on its boundary have disjoint colors.
The minimum integer $n$ such that $G$ has a rainbow $n$-coloring is the \emph{rainbowness} of $G$, denoted by $\text{rb}(G)$.
An \emph{antirainbow coloring} (or a \emph{valid coloring}) is a coloring of a graph $G$ embedded on a surface
so that no face is rainbow.
The maximum integer $n$ such that $G$ has a surjective antirainbow $n$-coloring is the \emph{antirainbowness} of $G$, denoted by $\text{arb}(G)$.

Let $G$ be the graph consisting of $m\geq 3$ cycles of length $3$ with one common vertex,
which has two embeddings $f_1(G)$ and $f_2(G)$ on the sphere as shown in Fig.~\ref{fan}.
Then $G$ has $2m+1$ vertices.

\begin{figure}[htbp]
	\begin{center}
		\includegraphics[width=90mm]{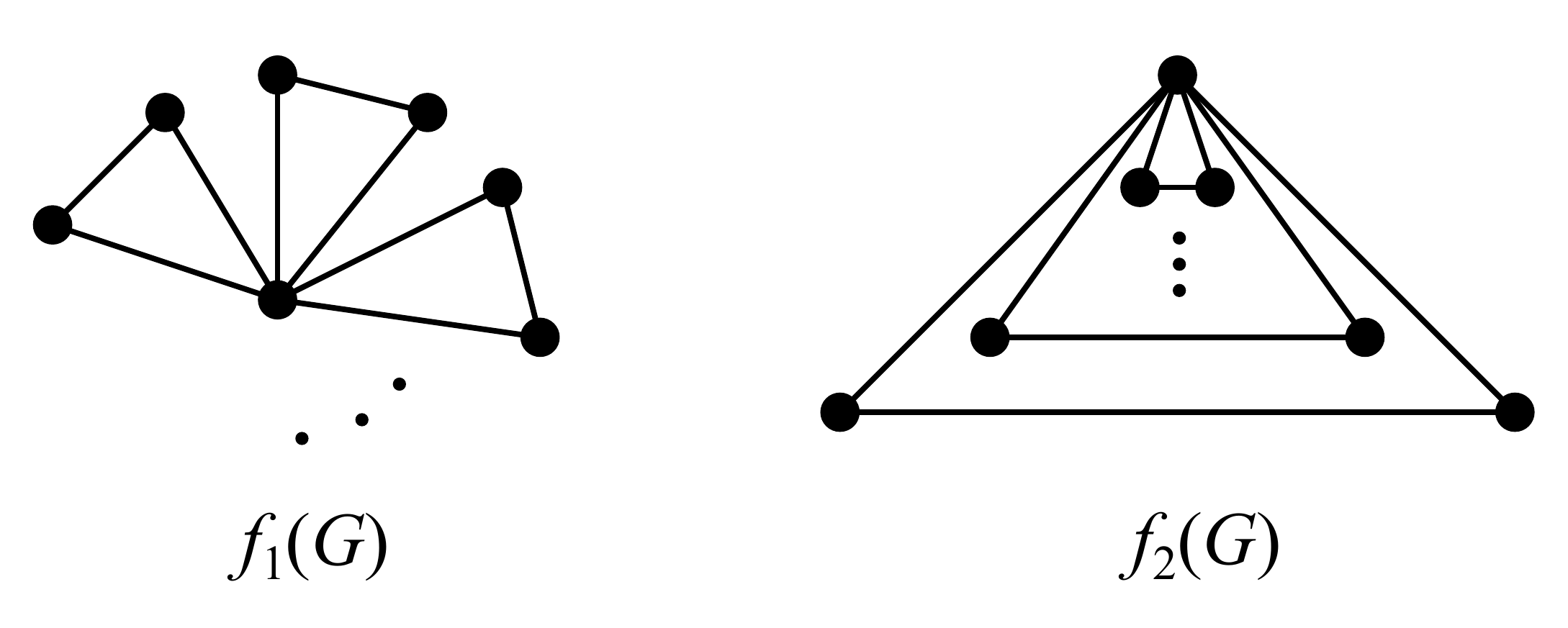}
	\end{center}
	\caption{Two embeddings of $G$ on the sphere.}
	\label{fan}
\end{figure}

As there is a face incident with all vertices in $f_1(G)$, we have $\text{rb}(f_1(G))=|V(G)|=2m+1$.
It is also easy to see that $\text{rb}(f_2(G))=5$.
Hence, the difference between $\text{rb}(f_1(G))$ and $\text{rb}(f_2(G))$ is $2m-4$.
It implies that the rainbowness of a graph embedded on a surface depends on the embedding.
Moreover, such a difference can be arbitrarily large.
On the other hand, it is easy to see that ${\rm rb}(G)=\chi(G)$ for every triangulation on a surface, where $\chi(G)$ is the chromatic number of $G$.
This implies that the rainbowness of a triangulation does not depend on the embedding.

Ramamurthi and West \cite{RW} observed that for the above two embeddings $f_1(G)$ and $f_2(G)$ of $G$ on the sphere,
$\text{arb}(f_1(G))=m+1$ and $\text{arb}(f_2(G))=\lceil 3m/2 \rceil$.
Then the difference of these antirainbownesses is $\lceil m/2 -1 \rceil$,
and hence the antirainbowness of a graph embedded on a surface also depends on the embedding.
Ramamurthi and West \cite{RW} conjectured this difference is the maximum difference for two embeddings of a graph on the sphere,
that is, for every planar graph $G$ of order $n$, there is no pair of embeddings of $G$ on the sphere whose antirainbownesses  differ from at least $\lfloor (n-2)/4 \rfloor$.

Arocha, Bracho and Neumann-Lara \cite{ABN} studied the antirainbow $3$-colorability of triangulations obtained from complete graphs, which they called the \emph{tightness}.
They proved that the complete graph of order $30$ has both of a tight triangulation and an untight one on the same surface.
This implies that the antirainbowness of triangulations depends on the embedding.
As the generalization of their work,
Negami \cite{Negami} introduced the \emph{looseness} of a triangulation $G$ on a surface,
which corresponds to $\text{arb}(G)+2$.
He proved that
for any graph having two triangulations $f_1(G)$ and $f_2(G)$ on a surface $\mathbb{F}$ of Euler genus $g$,
$ \vert \text{arb}(f_1(G))-\text{arb}(f_2(G)) \vert \leq 2 \lfloor g/2 \rfloor $.

A \emph{weak coloring} of a graph $G$ embedded on a surface is a coloring of $G$ such that no face is \emph{monochromatic},
that is, all vertices on its boundary have the same color.
Note that a weak coloring of an embedded graph corresponds to a proper coloring of its face-hypergraph.
The \emph{weak chromatic number} of $G$, denoted by $\chi_w(G)$, is the minimum integer $k$ such that
$G$ has a weak $k$-coloring.
K\"{u}ndgen and Ramamurthi \cite{KR} studied weak colorings of graphs embedded on surfaces from various viewpoints
and conjectured that
for each positive integer $k$, there is a graph that has two different embeddings on the same surface whose weak chromatic numbers differ by at least $k$.
Recently, the first author and Noguchi \cite{Enami} answered this conjecture affirmatively in two ways.
They first constructed two distinct embeddings of a simple graph on a surface
such that one of them has a weak $2$-coloring but the other has arbitrarily large weak chromatic number.
They second showed that there are non-simple graphs $G$ having two triangulations $f_1(G)$ and $f_2(G)$ on a surface with $\chi_w(f_1(G))=|V(G)|/2$ and $\chi_w(f_2(G))\leq |V(G)|/3$.

\section{Cycles in a triangulation}\label{pre}

To prove Theorem~\ref{upper},
we give some notations and introduce some lemmas.

Let $G$ be a graph and $H$ be a subgraph of $G$.
An edge not in $H$ but with both ends in $H$ is called \emph{chord} of $H$.
A subgraph $H$ of a graph $G$ is \emph{induced}
if $H$ has no chord.
An \emph{$H$-bridge} is a subgraph of $G$ induced by a chord of $H$, or a component of $G-V(H)$ together with all edges joining it to $H$.
In an $H$-bridge, a vertex belongs to $V(H)$ is called a \emph{vertex of attachment}.
Note that any two $H$-bridges are edge-disjoint
and meet only the common vertices of attachment.
(See \cite{text} for more details of $H$-bridges.)

\begin{lemma}\label{bridge}
Let $G$ be a triangulation on a surface,
and $C_1,C_2, \ldots ,C_k$ be vertex-disjoint facial cycles of $G$.
If there is no chord in the union $H=C_1\cup C_2\cup  \cdots \cup C_k$,
then there is only one $H$-bridge in $G$.
\end{lemma}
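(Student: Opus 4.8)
The plan is to show that $G-V(H)$ is connected; since $H$ is induced, every $H$-bridge is a component of $G-V(H)$ together with its attaching edges, so this is exactly equivalent to the assertion. Throughout I use that a triangulation of a connected surface is connected and that the link of each vertex is a cycle. Because $H$ is induced, the only neighbours of a vertex of $C_i$ lying in $V(H)$ are its two neighbours along $C_i$; equivalently, every edge of $G$ is an edge of some $C_i$, an edge inside a component of $G-V(H)$, or an attaching edge joining $V(C_i)$ to $G-V(H)$. I will first prove the local statement that each $C_i$ meets only one bridge, and then globalise it using the connectivity of $G$.

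For the local step, fix a facial triangle $C_i$, write $C_i=abc$, and let $F_i$ be the face it bounds. Consider the link of $a$, which is a cycle on the neighbours of $a$. Since $F_i=abc$ is a face, the corner of $F_i$ at $a$ makes $b$ and $c$ consecutive on this link; deleting the link-edge $bc$ leaves a path $b,x_1,\dots,x_p,c$ through the remaining neighbours of $a$. Consecutive vertices of a link are adjacent in a triangulation, so $x_1,\dots,x_p$ is a path in $G$, and each $x_j$ lies outside $V(H)$, since an edge $ax_j$ with $x_j\in V(H)\setminus\{b,c\}$ would be a chord of $H$. Hence all neighbours of $a$ outside $V(H)$ lie in a single component of $G-V(H)$. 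The same holds for $b$ and for $c$; moreover the face other than $F_i$ on the edge $ab$ is a triangle $abz$ whose apex $z$ is a common neighbour of $a$ and $b$ with $z\notin V(H)$ (again by the no-chord hypothesis, and $z\neq c$ unless $G$ is the two-triangle sphere, which we exclude). Thus the components obtained from $a$ and from $b$ share $z$ and coincide, and likewise for $c$. Therefore every vertex of $G-V(H)$ adjacent to $C_i$ lies in one and the same component $B_i$, and consequently any bridge attaching to $C_i$ equals $B_i$: each facial cycle meets exactly one bridge. I expect this local step, which is where the triangulation structure and the absence of chords are both essential, to be the crux of the argument.

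To globalise, I form the auxiliary bipartite graph $\mathcal{A}$ whose nodes are the bridges and the cycles $C_1,\dots,C_k$, joining $C_i$ to a bridge $B$ precisely when $B$ attaches to $C_i$. Since every edge of $G$ lies inside a cycle, inside a bridge, or is an attaching edge, a walk in $G$ induces a walk in $\mathcal{A}$, so the connectivity of $G$ forces $\mathcal{A}$ to be connected. By the local step each cycle-node of $\mathcal{A}$ has degree exactly $1$, and $\mathcal{A}$ has edges only between cycle-nodes and bridge-nodes. A degree-one node cannot be internal to any path, so no path of $\mathcal{A}$ joins two distinct bridge-nodes; connectivity then forces a single bridge-node, which is the desired conclusion. (If $V(G)=V(H)$ there is no bridge at all, so the statement presumes $V(G)\neq V(H)$, and then exactly one bridge exists.)
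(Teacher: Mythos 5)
Your proof is correct and takes essentially the same route as the paper's: both rest on the local claim that all vertices of $G-V(H)$ adjacent to a fixed $C_i$ lie in a single bridge, obtained by chaining consecutive triangular faces around the vertices of $C_i$ (the paper phrases this via facial cycles meeting $C_i$, you via vertex links), followed by a connectivity-of-$G$ gluing step. Your write-up merely makes explicit what the paper's terse ``Therefore'' leaves implicit, namely the auxiliary bipartite graph for the globalization and the degenerate cases ($V(G)=V(H)$ and the two-triangle sphere).
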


\begin{proof}
Let $C=uvw$ be a facial cycle of $G$ bounded by three vertices $u,v$ and $w$.
Suppose that $C$ is not contained in $H$.
Since $H$ consists of vertex-disjoint cycles and has no chord,
$C$ meets at most one cycle of $H$.
Suppose that $C$ meets $C_1$ at a vertex, say $u$, and $v,w\not\in V(C_i)$ for any $1\leq i \leq k$.
If $v$ and $w$ belongs to different $H$-bridges in $G$,
then the edge $vw$ joins these $H$-bridges, a contradiction.
Hence, $v$ and $w$ belongs to the same $H$-bridge in $G$.
It implies that all vertices and edges around $C_i$ belongs to one $H$-bridge in $G$.
Suppose that $C$ meets none of $C_1,C_2, \ldots ,C_k$.
Then it is clear that $u,v$ and $w$ belong to the same $H$-bridge in $G$.
Therefore, there is only one $H$-bridge in $G$.
\end{proof}

Let $G$ be a graph embedded on a surface $\mathbb{F}$.
A cycle $C$ of $G$ is \emph{contractible}
if it bounds a disk in $\mathbb{F}$,
and \emph{separating}
if it separates $\mathbb{F}$ into two parts.
We say that $C$ is \emph{$2$-sided}
if it divides its annular neighbourhood into two parts,
and is \emph{$1$-sided} otherwise.
Note that a non-separating cycle of $G$ must be non-contractible,
and if a separating cycle $C$ of $G$ is not facial
then there are at least two $C$-bridges in $G$.

\begin{lemma}\label{facial}
Let $G$ be a graph which has two triangulations $f_1(G)$ and $f_2(G)$ on a surface,
and $C$ be a $3$-cycle of $G$.
If $f_1(C)$ is facial in $f_1(G)$ but $f_2(C)$ is not facial in $f_2(G)$,
then $f_2(C)$ is non-contractible in $f_2(G)$.
\end{lemma}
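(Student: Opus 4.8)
The plan is to argue by contradiction: I would assume that $f_2(C)$ is contractible and derive a contradiction with the uniqueness of the $C$-bridge that Lemma~\ref{bridge} forces in $f_1(G)$. The whole point is that the number of $C$-bridges is a property of the \emph{abstract} graph $G$, hence is the same quantity whether we view $G$ as $f_1(G)$ or as $f_2(G)$; so a count that disagrees between the two embeddings is impossible.

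First I would record the combinatorial input from $f_1$. Since $C=uvw$ is a $3$-cycle, its only possible chords would be the edges $uv,vw,wu$, which already belong to $C$; hence $C$ has no chord. Applying Lemma~\ref{bridge} to $f_1(G)$ with $k=1$ and $H=C$ is then legitimate, because $f_1(C)$ is facial by hypothesis, $C$ is (trivially) a single vertex-disjoint facial cycle, and $C$ has no chord. We conclude that $G$ has exactly one $C$-bridge.

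Next I would analyze $f_2(G)$ under the assumption that $f_2(C)$ is contractible, so that it bounds a disk $D$, and write $D'$ for the complementary region $\mathbb{F}\setminus\overline{D}$. Because $C$ has no chord, the interior of $D$ contains an edge or vertex of $G$ only if it contains an interior vertex; so $D$ fails to be a single face precisely when it contains a vertex of $G$ in its interior. Since $f_2(C)$ is not facial, $D$ must therefore contain at least one interior vertex. Symmetrically, if $D'$ contained no vertex of $G$ in its interior, then (as $\partial D'=C\subseteq G$ and the faces of a triangulation are exactly the components of $\mathbb{F}-G$) the region $D'$ would itself be a single face bounded by $C$, making $f_2(C)$ facial and contradicting the hypothesis; hence $D'$ also contains an interior vertex.

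Finally I would turn these interior vertices into distinct $C$-bridges. Since $C$ separates $\mathbb{F}$ into $D$ and $D'$, no path of $G-\{u,v,w\}$ can join an interior vertex of $D$ to one of $D'$ without meeting $C$, i.e.\ without passing through $\{u,v,w\}$; thus a component of $G-V(C)$ lying in $D$ and one lying in $D'$ are genuinely different components, and each yields a $C$-bridge. This gives at least two $C$-bridges in $G$, contradicting the single $C$-bridge guaranteed by Lemma~\ref{bridge}. Therefore $f_2(C)$ cannot be contractible. I expect the only delicate point to be the handling of an empty complementary region $D'$: one must justify that, when $D'$ contains no vertex of $G$, it is literally a single face of $f_2(G)$ bounded by $C$ (so that $C$ would be facial), which is where the definition of a face as a component of $\mathbb{F}-G$, together with $\partial D'=C$, does the work.
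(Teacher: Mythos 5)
Your proof is correct and takes essentially the same route as the paper's: assume $f_2(C)$ is contractible, note that a contractible non-facial cycle separates the surface and hence forces at least two $C$-bridges, and contradict the unique $C$-bridge obtained from Lemma~\ref{bridge} applied to $f_1(G)$. The only difference is one of detail: you explicitly verify the chordlessness of the $3$-cycle and that both sides of the bounding disk contain interior vertices, points the paper delegates to its earlier remark that a separating non-facial cycle has at least two $C$-bridges.
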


\begin{proof}
Suppose to that $f_2(C)$ is contractible in $f_2(G)$.
Since $f_2(C)$ is not facial in $f_2(G)$,
it separates $f_2(G)$ into two components.
On the other hand, since $f_1(C)$ is facial in $f_1(G)$,
it follows from Lemma~\ref{bridge} that $G$ has only one $C$-bridge in $G$, a contradiction. 
\end{proof}

For two disjoint cycles $C_1$ and $C_2$ of a graph embedded on a surface $\mathbb{F}$,
cut the surface $\mathbb{F}$ along them.
When one of the component of the resulting surface
is an annulus with boundary components $C_1$ and $C_2$,
we say that $C_1$ and $C_2$ are \emph{homotopic}.

We introduce two lemmas about sets of pairwise non-homotopic cycles.
The second lemma closely follows from the proof of \cite[Proposition~3.7]{MM}, which corresponds to the first one.
However, to keep the paper self-contained, we give its proof.

\begin{lemma}[Malni\v{c} and Mohar \cite{MM}]\label{prop}
Let $G$ be a graph embedded on a surface $\mathbb{F}$,
and let $g$ be the Euler genus of $\mathbb{F}$.
Let $\Gamma$ be a set of pairwise disjoint, non-contractible and pairwise non-homotopic cycles of $G$.
If $\mathbb{F}$ is orientable, then
$$ |\Gamma| \leq 
\begin{cases}
	g/2 & (g\leq 2)\\
	3g/2 - 3 & (otherwise).
\end{cases}
$$
If $\mathbb{F}$ is non-orientable, then
$$ |\Gamma| \leq 
\begin{cases}
	g & (g\leq 1)\\
	3g - 3 & (otherwise).
\end{cases}
$$
\end{lemma}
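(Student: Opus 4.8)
The plan is to cut the surface $\mathbb{F}$ open along all the cycles of $\Gamma$ and to read the bound off from the Euler characteristic of the resulting bordered surface. Write $\Gamma=\{C_1,\dots,C_n\}$ with $n=|\Gamma|$, and let $\mathbb{F}'$ be the compact surface with boundary obtained by cutting $\mathbb{F}$ along $C_1\cup\cdots\cup C_n$. Cutting along a $1$-dimensional set does not change the Euler characteristic, so $\mathbb{F}'$ has Euler characteristic equal to that of $\mathbb{F}$, namely $2-g$. Each $2$-sided cycle becomes two boundary circles of $\mathbb{F}'$ and each $1$-sided cycle becomes a single boundary circle, and every component of $\mathbb{F}'$ carries at least one such circle. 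The whole argument then reduces to bounding $n$ against $2-g$ by controlling the components of $\mathbb{F}'$.

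First I would exclude the degenerate pieces. Since each $C_i$ is non-contractible, no component of $\mathbb{F}'$ is a disk, for a disk component has a single boundary circle coming from one cycle and would force that cycle to bound a disk. Since the cycles are pairwise non-homotopic, no component is an annulus whose two boundary circles come from two \emph{distinct} cycles, as such an annulus is precisely a homotopy between them in the sense defined above; the only other way an annulus can arise is with both boundary circles coming from a single $2$-sided cycle, which forces $\mathbb{F}$ to be a torus or a Klein bottle and $\Gamma$ a single curve. These low-genus situations (sphere, torus, projective plane, Klein bottle) I would dispose of directly, and they account for the special small-$g$ rows of the statement.

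In the orientable case there are no $1$-sided cycles, so $\mathbb{F}'$ is orientable with exactly $2n$ boundary circles and, by the previous paragraph, every component has Euler characteristic at most $-1$. Summing Euler characteristics bounds the number $m$ of components by $g-2$, while writing each component's boundary-circle count $r_j$ in terms of its genus $\gamma_j$ converts the identity $\sum_j r_j = 2n$ into $n\le m+g/2-1\le 3g/2-3$, with equality approached exactly when every piece is a pair of pants. Setting $g=2h$, this is the classical bound $3h-3$ for a pants decomposition of the orientable genus-$h$ surface, and the torus ($g=2$) and sphere ($g=0$) give the value $g/2$ handled separately.

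The main obstacle is the non-orientable case, and this is where I would follow the analysis of Malni\v{c} and Mohar \cite{MM} most closely. Now a $1$-sided cycle contributes only one boundary circle, so it is ``cheaper'' than a $2$-sided one, and, crucially, M\"obius-band components of Euler characteristic $0$ can genuinely occur and cannot be excluded by non-contractibility or non-homotopy: a $2$-sided cycle may bound a M\"obius band without being homotopic to its core. These zero-Euler-characteristic pieces are exactly what push the extremal count above a pure pants decomposition and account for the larger constant $3g-3$. To close the count one must simultaneously track the number of M\"obius pieces and use the fact that a family of pairwise disjoint $1$-sided cycles is itself limited by the non-orientable genus; balancing these two effects against the Euler-characteristic budget $2-g$ is the delicate step, after which the same summation as in the orientable case yields $|\Gamma|\le 3g-3$ for $g\ge 2$ and $|\Gamma|\le g$ for the projective plane.
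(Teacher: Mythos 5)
A preliminary remark: the paper does not actually prove this lemma --- it is quoted from Malni\v{c} and Mohar \cite{MM} --- but the paper's proof of the companion Lemma~\ref{prop2} uses exactly the strategy you propose (cut along the cycles, cap the boundary circles, and count Euler characteristics), so your general approach is the right one, and your orientable case is essentially complete and correct: there all cycles are $2$-sided, disks are excluded by non-contractibility, annuli are excluded by non-homotopy (or force $\mathbb{F}$ to be the torus), and the computation $2-g=2m-2\sum_j\gamma_j-2n$ with $m\leq g-2$ gives $n\leq 3g/2-3$ as you say.

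The genuine gap is in the non-orientable case, and it is not only that the ``delicate step'' is deferred rather than executed --- two of your structural exclusions are actually false there. First, a component of $\mathbb{F}'$ \emph{can} be a disk even though every $C_i$ is non-contractible: cutting along a $1$-sided cycle produces a single boundary circle that doubly covers the cycle, and if that circle bounds a disk the cycle does not bound a disk in $\mathbb{F}$ (it forces $\mathbb{F}$ to be the projective plane, e.g.\ the core curve of $\mathbb{P}^2$); your argument ``a disk component would force that cycle to bound a disk'' silently assumes the boundary circle is a single copy of the cycle. Second, an annulus component whose two boundary circles come from \emph{distinct} cycles is not excluded when one of them is $1$-sided: if $C_1$ is $1$-sided and $C_2$ is the $2$-sided boundary of a M\"obius neighbourhood of $C_1$, then cutting along both yields an annulus between the double cover of $C_1$ and a copy of $C_2$, yet $C_1$ and $C_2$ are non-homotopic in the sense of the paper (no component of the cut surface is an annulus with boundary components $C_1$ and $C_2$). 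So in the non-orientable count you must admit $\chi=0$ pieces beyond the M\"obius bands you mention, and the naive ``same summation as in the orientable case'' does not go through; the bookkeeping of how many $\chi=0$ pieces can occur, and which cycles they charge to, is precisely the content of the lemma and is missing. The cleanest repair is the device the paper uses for Lemma~\ref{prop2}: enlarge $\Gamma$ to a maximal family, so that after capping each boundary component every piece $\mathbb{F}_i^*$ is a sphere or a projective plane, show each sphere piece has at least three boundary circles, and combine $3n_s+n_p\leq\sum_i b(\partial\mathbb{F}_i)$ with $n_p\leq g$ and $\chi(\mathbb{F})=2n_s+n_p-\sum_i b(\partial\mathbb{F}_i)=2-g$; splitting the family into $1$-sided and $2$-sided subfamilies as in Lemma~\ref{prop2} then yields the stated bounds.
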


\begin{lemma}\label{prop2}
Let $G$ be a graph embedded on a non-orientable surface $\mathbb{F}$ of Euler genus $g$.
let $\Gamma_1$ (resp.~$\Gamma_2$) be a set of pairwise disjoint, non-contractible and pairwise non-homotopic $1$-sided (resp.~$2$-sided) cycles of $G$.
Then $|\Gamma_1| \leq g$ and
$$ |\Gamma_2| \leq 
\begin{cases}
0 & (g= 1)\\
2g - 3 & (otherwise).
\end{cases}
$$
\end{lemma}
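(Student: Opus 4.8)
\section*{Proof proposal}

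The plan is to treat the two bounds separately, reducing everything to a cut-and-cap Euler-characteristic computation in the spirit of the proof of \cite[Proposition~3.7]{MM}. Throughout write $n=|\Gamma_2|$. I would dispose of $g=1$ first: then $\mathbb{F}$ is the projective plane, in which every non-contractible simple closed curve is $1$-sided, so no $2$-sided non-contractible cycle exists and $|\Gamma_2|=0$, while any two disjoint $1$-sided curves must intersect, giving $|\Gamma_1|\le 1=g$. For $g\ge 2$ I argue as below; since enlarging a family of pairwise disjoint, non-contractible, pairwise non-homotopic cycles only increases its cardinality, it suffices to bound maximal families.

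For $|\Gamma_1|\le g$ the quickest route is homological. Work in $H_1(\mathbb{F};\mathbb{Z}_2)\cong\mathbb{Z}_2^{\,g}$ with its non-degenerate intersection pairing. A simple closed curve is $1$-sided exactly when its $\mathbb{Z}_2$-self-intersection is $1$, so each $C\in\Gamma_1$ satisfies $[C]\cdot[C]=1$, while disjointness gives $[C]\cdot[C']=0$ for distinct $C,C'\in\Gamma_1$. Pairing any relation $\sum_{C\in I}[C]=0$ with some $[C']$ for $C'\in I$ forces $1=[C']\cdot[C']=0$, a contradiction; hence $\{[C]:C\in\Gamma_1\}$ is linearly independent and $|\Gamma_1|\le g$. (In particular disjoint $1$-sided cycles are automatically non-homotopic, so that hypothesis is not even needed here.)

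For $|\Gamma_2|\le 2g-3$ the main step is structural. Cut $\mathbb{F}$ along all cycles of $\Gamma_2$ to obtain a bordered surface $S$, and cap each boundary circle with a disk to obtain a closed surface $\hat S=\hat S_1\sqcup\cdots\sqcup\hat S_m$; let $\gamma_i$ be the Euler genus of $\hat S_i$. Since cutting along $2$-sided cycles preserves Euler characteristic and each of the $2n$ caps adds $1$, the bookkeeping $\sum_i(2-\gamma_i)=\chi(\hat S)=(2-g)+2n$ yields
\[2n=(g-2)+2m-\sum_i\gamma_i.\]
Analysing the complementary regions as in \cite{MM}, maximality forces each region to be one of the three bordered surfaces containing no essential, non--boundary-parallel, $2$-sided simple closed curve: an annulus, a pair of pants, or a M\"obius band (any other region would contain a cycle addable to $\Gamma_2$). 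Writing $p,q,r$ for the numbers of pants, M\"obius bands, and annuli, and noting that only pants have $-\chi=1$ and only M\"obius bands have positive Euler genus, a short computation gives $p=g-2$ and $\sum_i\gamma_i=q$, with $m=p+q+r$.

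To finish I would set up the region-adjacency graph $\mathcal G$, whose vertices are the pieces and whose edges are the cycles of $\Gamma_2$, each cycle attached to the piece or pieces meeting its two sides. Connectivity of $\mathbb{F}$ gives $n=m-1+\beta$ with $\beta\ge 0$ the cycle rank of $\mathcal G$, and combining this with the displayed identity and $p=g-2$, $\sum_i\gamma_i=q$ yields $q=g-2\beta$ and $n=2g-3+(r-\beta)$. The proof then closes with the observation that $r\le\beta$: an annulus region has both of its boundary circles on a single cycle of $\Gamma_2$ (two distinct cycles cobounding an annulus would be homotopic, which is excluded), so that cycle is a loop of $\mathcal G$ based at the annulus-vertex; distinct annuli give distinct loops, and each loop is an independent cycle of $\mathcal G$, whence $r\le\beta$ and $n\le 2g-3$. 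The delicate point, which I expect to be the main obstacle, is the structural classification: justifying that maximality really forces every region into the short list above, adapting the proof of \cite[Proposition~3.7]{MM} while tracking the M\"obius-band regions (which carry the genus $q$) separately from the pants. As a consistency check, the two bounds sum to $g+(2g-3)=3g-3$, recovering the bound of Lemma~\ref{prop} for $g\ge 2$.
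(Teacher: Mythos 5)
Your proposal is correct, and although it shares the paper's overall cut-and-cap Euler-characteristic strategy for $\Gamma_2$, both halves genuinely diverge in execution. For $|\Gamma_1|\le g$ the paper simply cuts along $\Gamma_1$: $1$-sided cycles are non-separating and cutting preserves Euler characteristic, so the cut surface is connected with $\chi=2-g$ and $|\Gamma_1|$ boundary circles, whence $2-g\le 2-|\Gamma_1|$. Your $\mathbb{Z}_2$-intersection-form argument is a clean alternative, and your parenthetical observation that disjoint $1$-sided cycles are automatically non-homotopic is a small genuine strengthening (neither maximality nor the non-homotopy hypothesis is needed, which is also implicitly true of the paper's cutting argument). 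For $\Gamma_2$ the paper is coarser: it assumes $g\ge 3$ (disposing of $g\le 2$ by inspection), caps the pieces and claims by maximality only that each capped piece is a sphere or a projective plane, shows a sphere piece has at least three boundary circles --- your annulus dichotomy (both circles from one cycle forces the Klein bottle; two distinct cycles would be homotopic) is exactly the paper's argument, except the paper uses $g\ge 3$ to kill annuli outright --- and finishes with the weighted estimate $3n_s+n_p\le 2|\Gamma_2|$, $n_p\le g$ fed into $\chi(\mathbb{F})=2n_s+n_p-2|\Gamma_2|$. Your finer classification into annuli, pants and M\"obius bands plus the region-adjacency graph buys more: an exact identity $n=2g-3+(r-\beta)$ rather than an inequality, and a uniform treatment of $g=2$ via the loop count $r\le\beta$. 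The price is the stronger structural claim you rightly flag as delicate: beyond the paper's sphere-or-projective-plane statement you must also exclude spheres with at least four holes and projective planes with at least two holes as regions (both do contain essential, non-boundary-parallel $2$-sided curves, e.g.\ a curve splitting the boundary circles, or the boundary of a M\"obius neighbourhood of a $1$-sided curve), whereas the paper's weaker claim already suffices for its inequality.

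One caution, which applies verbatim to the paper's own proof and to the appeal to \cite{MM}: ``maximal'' must mean maximal among families of pairwise disjoint, non-contractible, pairwise non-homotopic $2$-sided \emph{simple closed curves on $\mathbb{F}$}, not among families of cycles of $G$ --- a region may contain no cycle of $G$ at all, so graph-cycle maximality forces nothing about its topology. Since cycles of $G$ are in particular simple closed curves, extending $\Gamma_2$ to a curve-maximal family and bounding that family is legitimate; with this reading your structural step is standard surface topology (incompressibility of the pieces via an innermost-disk argument, and non-homotopy of a new curve to the cut curves via an innermost sub-annulus), and your proof closes.
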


\begin{proof}
It is easy to see that this lemma holds for $g\leq 2$.
Hence, we may assume that $g\geq 3$. 
Moreover, we may assume that $\Gamma_1$ is maximal,
that is there is no $1$-sided cycle in $G$ disjoint from $\Gamma_1$.
Cutting $\mathbb{F}$ along the cycles in $\Gamma_1$,
we obtain a connected surface, denoted by $\mathbb{F}'$,
which has $|\Gamma_1|$ boundary components.
Thus, $\chi(\mathbb{F}')\leq 2-|\Gamma_1|$.
Since $\chi(\mathbb{F}')=\chi(\mathbb{F})=2-g$, 
we have $|\Gamma_1| \leq g$.

We may also assume that $\Gamma_2$ is maximal,
that is, all $2$-sided cycles in $G$ disjoint from $\Gamma_2$ is contractible or homotopic to some element of $\Gamma_2$.
Cut $\mathbb{F}$ along the cycles in $\Gamma_2$.
Then $\mathbb{F}$ is separated into some connected surfaces,
denoted by $\mathbb{F}_1, \mathbb{F}_2, \ldots , \mathbb{F}_k$.
Note that they are all compact and with non-empty boundary.
We denote by $b(\partial \mathbb{F}_i)$ the number of boundary components of $\mathbb{F}_i$ for $1\leq i \leq k$.
Since each cycle in $\Gamma_2$ gives rise to two boundary components,
we have $\sum_{i=1}^k b(\partial \mathbb{F}_i)=2|\Gamma_2|$.

Let $\mathbb{F}_1^*, \mathbb{F}_2^*, \ldots , \mathbb{F}_k^*$ be the surfaces obtained from $\mathbb{F}_1, \mathbb{F}_2, \ldots , \mathbb{F}_k$ by pasting a disk to each boundary component.
By the maximality of $\Gamma_2$,
$\mathbb{F}_i^*$ is the sphere or the projective plane for $1\leq i \leq k$.
We denote by $n_s$ and $n_p$ the numbers of the spheres and the projective planes among $\mathbb{F}_i^*$'s, respectively.
Then we have $n_p\leq g$ and $\sum_{i=1}^k \chi(\mathbb{F}_i^*)=2n_s+n_p$.

Now we shall show that if $\mathbb{F}_i^*$ is the sphere,
then $b(\partial \mathbb{F}_i)\geq 3$.
If $b(\partial \mathbb{F}_i)=1$,
then $\mathbb{F}_i$ is a closed disk, that is, the cycle bounding $\mathbb{F}_i$ is contractible in $\mathbb{F}$, a contradiction.
Suppose that $b(\partial \mathbb{F}_i)=2$.
Then $\mathbb{F}_i$ is an annulus.
If two cycles of $\Gamma_2$ corresponding to the boundary components $\mathbb{F}_i$
are the same,
then $\mathbb{F}$ must be the Klein bottle, a contradiction.
Thus, these two cycles are different from each other.
However, in this situation, they are homotopic in $\mathbb{F}$, a contradiction.
Therefore, we may assume that $b(\partial \mathbb{F}_i)\geq 3$.
It implies that $3n_s+n_p\leq 2|\Gamma_2|$.

Since $\chi(\mathbb{F})$ is equal to the sum of all $\mathbb{F}_i$'s,
we have
\begin{eqnarray*}
\chi(\mathbb{F})=\sum_{i=1}^k \chi(\mathbb{F}_i)=\sum_{i=1}^k \chi(\mathbb{F}_i^*)-\sum_{i=1}^k b(\partial \mathbb{F}_i) &=& 2n_s+n_p-2|\Gamma_2|.\\
&=& \frac{2}{3}(3n_s+n_p-2|\Gamma_2|)+\frac{1}{3}n_p-\frac{2}{3}|\Gamma_2|\\
&\leq& \frac{1}{3}g-\frac{2}{3}|\Gamma_2|.
\end{eqnarray*}

Since $\chi(\mathbb{F})=2-g$, we have $|\Gamma_2|\leq 2g-3$.
\end{proof}

\section{Proof of Theorem~\ref{upper}}\label{proof}

\begin{proof}[Proof of Theorem \ref{upper}]

Suppose that $\psi_3(f_1(G))=k$ and $\psi_3(f_2(G))<k$.
Let $c:V(G)\to \{1,2,\ldots , k\}$ be a facial $3$-complete $k$-coloring of $f_1(G)$. 
Then, every triple of $k$-colors appears in some face of $f_1(G)$.
On the other hand, some triples do not appear in the faces of $f_2(G)$.
Let $\mathcal{T}$ be a set of triples in $k$ colors
such that any triple in $\mathcal{T}$ does not appear in the faces of $f_2(G)$,
and for any pair of triples $T$ and $T'$ in $\mathcal{T}$, $T\cap T'=\emptyset$.
Moreover, we choose $\mathcal{T}$ so that $|\mathcal{T}|$ is as large as possible.
Let $T_1,T_2,\ldots ,T_m$ be the triples in $\mathcal{T}$, and so $|\mathcal{T}|=m$.
By the maximality of  $\mathcal{T}$,
we can choose $k-3m$ colors so that
every triple in these colors appear in some face of $f_2(G)$.
It implies that $f_2(G)$ has a facial $3$-complete $(\max\{3, k-3m\})$-coloring.
Then, $|\psi_3(f_1(G))-\psi_3(f_2(G))|\leq 3m$.

Let $\mathcal{C}=\{ C_1, C_2, \ldots , C_m \}$ be a set of facial cycles in $f_1(G)$
such that $c(V(C_i))=T_i$ for $1\leq i \leq m$.
Since every $C_i$ is not facial in $f_2(G)$,
it follows from Lemma~\ref{facial} that every $C_i$ is non-contractible in $f_2(G)$. 

\begin{claim}\label{cl}
There are at most three pairwise homotopic cycles of $\mathcal{C}$ in $f_2(G)$.
\end{claim}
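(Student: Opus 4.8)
The plan is to argue by contradiction. Assuming that $\mathcal{C}$ contains $r\ge 4$ pairwise homotopic cycles in $f_2(G)$, I would use the tension between the two embeddings to produce an edge of $G$ that, while forced to exist by the graph structure coming from $f_1(G)$, cannot be drawn in $f_2(G)$.

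First I would fix the topological picture in $f_2(G)$. Since two homotopic cycles cobound an annulus, each such cycle is $2$-sided, and a family of pairwise disjoint, pairwise homotopic $2$-sided cycles can be arranged as parallel curves $D_1,D_2,\ldots ,D_r$ so that consecutive cycles $D_i$ and $D_{i+1}$ cobound an annulus $A_i$ whose interior meets no other $D_j$, the remaining part of $\mathbb{F}$ (bounded by $D_1$ and $D_r$) being a single region $R$. The feature I want to extract is the incidence between the cycles and these complementary regions: the region $A_i$ is incident only with $D_i$ and $D_{i+1}$, and $R$ only with $D_1$ and $D_r$. Consequently, once $r\ge 4$, no complementary region is incident with both $D_1$ and the interior cycle $D_3$.

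The key step is to show that $D_1$ and $D_3$ are joined by a chord. They are vertex-disjoint (their colour triples $T_1,T_3$ are disjoint) and are facial in $f_1(G)$, so Lemma~\ref{bridge}, applied in $f_1(G)$, says that if $D_1\cup D_3$ had no chord then $G$ would have exactly one $(D_1\cup D_3)$-bridge. But being a bridge is a purely graph-theoretic property, so this conclusion can be tested in $f_2(G)$: there $D_2$ lies in the piece between $D_1$ and $D_3$ while $D_4$ lies beyond $D_3$, and any path in $G$ from $D_2$ to $D_4$ must pass through $V(D_1)\cup V(D_3)$; hence $D_2$ and $D_4$ belong to different $(D_1\cup D_3)$-bridges. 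This contradiction forces a chord of $D_1\cup D_3$, and since $D_1,D_3$ are triangles this chord is an edge $e=vw$ with $v\in V(D_1)$ and $w\in V(D_3)$. Finally I would note that $e$ cannot be embedded in $f_2(G)$: as $f_2(G)$ is a triangulation, $e$ bounds a triangular face $vwu$ lying inside a single complementary region of $D_1\cup\cdots\cup D_r$, yet the closure of that region would then meet both $D_1$ (at $v$) and $D_3$ (at $w$), which is impossible for $r\ge 4$. This contradiction gives $r\le 3$.

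The main obstacle will be the middle step, namely transferring the single-bridge conclusion of Lemma~\ref{bridge} (legitimate precisely because $D_1,D_3$ are facial in $f_1(G)$) across to $f_2(G)$, together with making the parallel-family topology rigorous: one must describe the complementary regions and their incidences carefully enough that the forced chord is genuinely undrawable, and separately treat the cyclic situation in which $\mathbb{F}$ is a torus or Klein bottle and $R$ is itself an annulus. Once these incidences are pinned down, the undrawability argument and the resulting bound $r\le 3$ are routine.
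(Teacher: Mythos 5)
Your proof is correct and is essentially the paper's own argument in contrapositive form: the paper first shows $C_1\cup C_3$ has no chord (the separation of $C_1$ from $C_3$ by $C_2\cup C_4$, which is exactly your region-incidence observation) and then contradicts the single-bridge conclusion of Lemma~\ref{bridge} with the two bridges containing $C_2$ and $C_4$, whereas you use those two bridges to force a chord via Lemma~\ref{bridge} and then rule the chord out with the same incidence fact. One cosmetic slip: when the cycles are homotopic to a separating curve, the part of $\mathbb{F}$ outside the annuli $A_1,\ldots,A_{r-1}$ can consist of two regions (one incident only with $D_1$, one only with $D_r$) rather than a single $R$, but this only strengthens the incidence property you actually use, so nothing breaks.
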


\begin{proof}
Suppose that $C_1,C_2,C_3$ and $C_4$ are pairwise homotopic in $f_2(G)$,
and appear on the annulus bounded by $C_1$ and $C_4$ in this order.
Thus, the union $C_2 \cup C_4$ separates $C_1$ from $C_3$,
and hence there are no chords of $C_1 \cup C_3$.
Similarly, $C_1 \cup C_3$ also separates $C_2$ from $C_4$.
It implies that there are at least two $C_1 \cup C_3$-bridges in $G$.
On the other hand, since both of $C_1$ and $C_3$ are facial in $f_1(G)$ and $C_1 \cup C_3$ has no chord,
it follows from Lemma~\ref{bridge} that there is only one $C_1 \cup C_3$-bridge in $G$,
a contradiction.
Therefore, there are at most three pairwise homotopic cycles of $\mathcal{C}$ in $f_2(G)$.
\end{proof}

Now we shall give the upper bound for $|\mathcal{T}|=m$,
which induces the upper bound for $|\psi_3(f_1(G))-\psi_3(f_2(G))|$.
We first consider the case when the surface $\mathbb{F}$ is homeomorphic to one of the sphere, the projective plane, and the torus.
Suppose that $\mathbb{F}$ is the sphere.
All cycles in $G$ is contractible, and hence $\mathcal{C}=\emptyset$.
Actually, it follows Lemma~\ref{facial} that $f_1(G)$ and $f_2(G)$ are essentially equivalent embeddings.
(In general, Whitney \cite{Whitney} showed that every $3$-connected planar graph has essentially unique embedding in the sphere.)
Suppose that $\mathbb{F}$ is the projective plane.
There is no pair of disjoint non-contractible cycles in $f_2(G)$,
and hence $m \leq 1$.
Suppose that $\mathbb{F}$ is the torus.
All non-contractible and pairwise disjoint cycles in $G$ are pairwise homotopic. 
Then, all cycles in $\mathcal{C}$ are pairwise homotopic by Lemma~\ref{prop},
and hence it follows from Claim~\ref{cl} that $m \leq 3$.

Second, suppose that $\mathbb{F}$ is an orientable surface of genus at least two.
If $m> 9g-9$, then there are at least four pairwise homotopic cycles in $\mathcal{C}$ by Lemma~\ref{prop},
which contradicts Claim~\ref{cl}.
Hence, we have $m\leq 9g-9$.
Finally, suppose that $\mathbb{F}$ is a non-orientable surface of genus at least two.
If $m>7g-9$, then there are at least $6g-8$ $2$-sided cycles in $\mathcal{C}$,
and hence some four of them are pairwise homotopic by Lemma~\ref{prop2},
which contradicts Claim~\ref{cl}.
Therefore, in any case, the desired inequality holds.
\end{proof}

\section{Facial complete colorings of non-simple graphs}\label{multi}

In this section, we consider graphs which may have multiple edges.
We denote by $K_n$ the complete graph of order $n$,
and denote by $K_n^m$ the non-simple graph obtained from $K_n$ by replacing each edge with $m$ multiple edges.

The first author \cite{Enami} constructed two triangulations $f_1(G)$ and $f_2(G)$ obtained from the graph $G=K_{12m}^{6m-1}$ on a surface for any positive integer $m$.
The weak chromatic numbers of these triangulations differ by at least $2m$,
and hence his construction gives an affirmatively answer of K\"{u}ndgen and Ramamurthi's conjecture \cite[Conjecture 8.1]{KR} (see also Section \ref{related} in this paper).
We now show that the facial $3$-achromatic numbers of these triangulations also differ.

For details of constructions of $f_1(G)$ and $f_2(G)$, see \cite[Section~3]{Enami}.
The face-hypergraph $\mathcal{H}(f_1(G))$ of $f_1(G)$ is isomorphic to a complete $3$-uniform hypergraph.
That is, the triangulation $f_1(G)$ has exactly $\binom{|V(G)|}{3}$ faces and there is a face bounded by each triple of vertices.
(Such a triangulation is called \emph{complete}, whose notion was defined in \cite{KR}.)
Then it is easy to see that $\psi_3(f_1(G))=|V(G)|=12m$.

Let $T$ be a triangulation on a surface obtained from $K_{12m}$
(by Ringel's Map Color Theorem \cite{Ringel}, $K_{12m}$ has a triangulation on a surface).
The edge-set of $\mathcal{H}(f_2(G))$ coincides with that of $\mathcal{H}(T)$ by ignoring the multiplicity of the edge-sets. 
It implies that $\psi_3(f_2(G))=\psi_3(T)$.
Suppose that $T$ is facially $3$-complete $k$-colorable.
Then, $T$ must have at least $\binom{k}{3}$ faces,
and hence we obtain the following inequality:
\begin{eqnarray*}
|\mathcal{F}(T)|=4m(12m-1) &\geq & k(k-1)(k-2)/6\\
288m^2-24m &\geq & (k-2)^3\\
\sqrt[3]{288}\: m^{2/3} & \geq & k-2\\
7m+2 &\geq & k.
\end{eqnarray*}

Then, $\psi_3(f_2(G))\leq 7m+2$ (this bound might be loose),
and hence we have
$$\psi_3(f_1(G))-\psi_3(f_2(G))\geq 5m-2.$$
Since $G$ is isomorphic to $K_{12m}^{6m-1}$,
both of two triangulations $f_1(G)$ and $f_2(G)$ are embedded on a surface of Euler genus $(m-1)(m-2)(2m+3)/3$.
It implies that for any non-negative integer $g$,
there is a graph having two triangulations on a surface of Euler genus at least $g$,
whose facial $3$-achromatic numbers differ from $\Omega(\sqrt[3]{g})$.

%\section*{Acknowledgements}
%The first author's work was supported by JSPS KAKENHI Grant Number 19J13359.
%Many thanks also to the referees for their careful reading and helpful comments which improved the quality of this paper.


\begin{thebibliography}{99}

\bibitem{ABN}
	 J.~L.~Arocha, J.~Bracho and V.~Neumann-Lara, Tight and untight triangulations of surfaces by complete graphs,
	{\em J. Combin. Theory Ser. B},
	{\bf 63}	(1995),
	185--199.
	
\bibitem{survey}
	 J.~Czap and S.~Jendrol', Facially-constrained colorings of plane graphs: a survey,
	{\em Discrete Math.},
	{\bf 340}	(2017),
	2691--2703.
	
\bibitem{Enami}
	 K.~Enami and K.~Noguchi, Embeddings of a graph into a surface with different weak chromatic numbers,
	 to appear in {\em Graphs and Combinatorics}.
	
\bibitem{HH}
	 F.~Harary and S.~Hedetniemi, The achromatic number of a graph,
	{\em J. Combin. Theory},
	{\bf 8}	(1970),
	154--161.
	
\bibitem{HM}
	 F.~Hughes and G.~MacGilivray, The achromatic number of a graphs: A survey and some new results,
	{\em Bull. Inst. Combin. Appl.},
	{\bf 19}	(1997),
	27--56.
	
\bibitem{KR}
	 A.~K\"{u}ndgen and R.~Ramamurthi, Coloring face-hypergraphs of graphs on surfaces,
	{\em J. Combin. Theory Ser. B},
	{\bf 85}	(2002),
	307--337.
	
\bibitem{MO}
	 N.~Matsumoto and Y.~Ohno, Facial achromatic number of triangulations on the sphere,
	{\em Discrete Math.},
	{\bf 343}	(2020),
	\#111651.	
	
\bibitem{MM}
	 A.~Malni\v{c} and B.~Mohar, Generating locally cyclic triangulations of surfaces,
	{\em J. Combin. Theory Ser. B},
	{\bf 56}	(1992),
	147--164.
	
\bibitem{text}
	B.~Mohar, and C.~Thomassen,
	{\em Graphs on Surfaces},
	The Johns Hopskins University Press,
	2001.
	
\bibitem{Negami}
	 S.~Negami, Looseness ranges of triangulations on closed surface,
	{\em Discrete Math.},
	{\bf 303}	(2005),
	167--174.
	
\bibitem{RW}
	 R.~Ramamurthi and D.~B.~West, Maximum face-constrained colorings of plane graphs,
	{\em  Discrete Math.},
	{\bf 274}	(2004),
	233--240.
	
\bibitem{Ringel}
	 G.~Ringel, Map Color Theorem,
	{\em  Springer Science \and Business Media vol. 209},	(1974).

\bibitem{Whitney}
	H.~Whitney, Congruent Graphs and the Connectivity of Graphs,
	{\em Amer. J. Math.},
	{\bf 54} (1932),
	150--168.
	
	
\end{thebibliography}
\end{document}